\newlength{\mycolwidth}
\newcolumntype{C}{>{\centering\arraybackslash}p{\mycolwidth}}  
\newcommand{\vpad}[1]{\rule[-1em]{0pt}{3em}#1}
\newtheorem{prop}{Proposition}
\newtheorem{theorem}[prop]{Theorem}
\newtheorem{corollary}[prop]{Corollary}
\theoremstyle{definition}
\newcommand{\seqnum}[1]{\href{https://oeis.org/#1}{\rm \underline{#1}}}
\newcommand{\mylabel}[2]{#2\def\@currentlabel{#2}\label{#1}}
\newcommand{\vast}{\bBigg@{4}}
\newcommand{\Vast}{\bBigg@{5}}
\begin{document}
\tikzset{mystyle/.style={matrix of nodes,
        nodes in empty cells,
        row 1/.style={nodes={draw=none}},
        row sep=-\pgflinewidth,
        column sep=-\pgflinewidth,
        nodes={draw,minimum width=1cm,minimum height=1cmanchor=center}}}
\tikzset{mystyleb/.style={matrix of nodes,
        nodes in empty cells,
        row sep=-\pgflinewidth,
        column sep=-\pgflinewidth,
        nodes={draw,minimum width=1cm,minimum height=1cmanchor=center}}}

\title{Even-up words and their variants}

\author[SELA FRIED]{Sela Fried$^*$}
\thanks{$^*$Department of Computer Science, Israel Academic College,
52275 Ramat Gan, Israel.
\\
\href{mailto:friedsela@gmail.com}{\tt friedsela@gmail.com}}

\begin{abstract}
Inspired by OEIS sequence \seqnum{A377912}, which consists of the nonnegative integers in which every even digit (except possibly the last) is immediately followed by a strictly larger digit, we define even-up and odd-up words over an alphabet of size~$k$ via similar constraints. We introduce and analyze weak and cyclic variants of these words, deriving explicit generating functions for all eight resulting classes. We then study Catalan words under analogous restrictions. Our results provide new combinatorial interpretations for many integer sequences, including the Motzkin numbers, the Riordan numbers, and the generalized Catalan numbers.
\bigskip

\noindent \textbf{Keywords:} Motzkin number, Rirodan number, generalized Catalan number, generating function, Catalan word.
\smallskip

\noindent
\textbf{Math.~Subj.~Class.:} 05A05, 05A15.
\end{abstract}

\maketitle

\section{Introduction} 
The sequence \seqnum{A377912} in the On-Line Encyclopedia of Integer Sequences (OEIS) \cite{SL} consists of the nonnegative integers in which each even digit (except possibly the last) is immediately followed by a strictly larger digit. This digit-based constraint inspired the definition of even-up words over a finite alphabet. Let $k\geq 2$ and $n\geq 0$ be two integers, and set $[k] = \{1,2,\ldots,k\}$. A word over $k$ of length $n$ is simply an element of $[k]^n$. An even-up word over $k$ of length $n$ is a word $w_1\cdots w_n\in[k]^n$ such that for every $i\in[n-1]$, if $w_i$ is even, then $w_{i+1}>w_i$. Odd-up words are defined analogously. Replacing the strict inequality by a non-strict one gives rise to weakly even-up and weakly odd-up words. In each of these four cases, we add the prefix `cyclic', if, in addition, the constraint in question applies also to $i=n$, where we define $w_{n+1} = w_1$. For example, the word $342121$ is cyclic odd-up (and therefore, of course, also weakly odd-up and cyclic weakly odd-up). 

This work is mainly concerned with the enumeration of these eight word classes. For each class we derive an explicit generating function. We then turn to Catalan words and enumerate those that are also (weakly) even/odd-up. Catalan words of length $n$ are words $w_{1}\cdots w_{n}\in[n]^n$ satisfying $w_{1}=1$ and $w_{i+1}\leq w_{i}+1$, for every $i\in [n-1]$. They are so named because they are enumerated by the Catalan numbers (e.g., \cite{St}). 

Our analysis yields new combinatorial interpretations for many existing integer sequences in the OEIS. For instance, the number of weakly even-up Catalan words of length n, which end with an odd number is given by the $n+1$th generalized Catalan number (see Theorem \ref{t0} and its proof).

Our work falls within the line of research on restricted words, which has a long and rich history in enumerative combinatorics. For example, Carlitz and Scoville \cite{C} studied up-down words, and, more recently, Knopfmacher et al.\ \cite{K} studied staircase words. A natural extension of this framework involves imposing additional constraints on already restricted classes. Among the most studied in this regard are Catalan words. For instance, Baril and Ramirez \cite{BR} investigated the enumeration of Catalan words avoiding ordered pairs of relations. In this context, our focus on Catalan words is a natural continuation of this line of inquiry. Moreover, uncovering new combinatorial interpretations for classical integer sequences remains a central pursuit in the field. As an illustration, Mansour and Shattuck \cite{MS} recently demonstrated that the generalized Catalan numbers also enumerate set partitions avoiding a pair of classical patterns of length four.

\section{Main results}

Theorems stated without proof in the sequel are established using arguments similar in nature to those provided in the detailed proofs. Each theorem is accompanied by a table illustrating the enumerated restricted word class for small values of $k$ and $n$.

\begin{theorem}\label{t102}
Let $A_k(x) = \sum_{n\geq 0} a_{k,n}x^n$ be the generating function for the number of even-up words. Then
\[A_k(x)=\frac{(x+1)^{\left\lfloor \frac{k}{2}\right\rfloor }}{2-(x+1)^{\left\lfloor \frac{k+1}{2}\right\rfloor }}.\] 
\end{theorem}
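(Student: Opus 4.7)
The plan is to decompose every even-up word according to the positions of its odd letters, translate the decomposition into a product of generating functions, and sum the resulting geometric series. The key observation is that odd letters impose no constraint on their successor, whereas every non-terminal even letter forces a strictly larger successor; consequently, a maximal run of consecutive even letters must be strictly increasing, and if such a run is followed by another letter (necessarily odd, by maximality), the last even of the run is strictly less than that odd letter. Writing $O = [k]\cap\{1,3,\ldots\}$ and $E = [k]\cap\{2,4,\ldots\}$, with $|O|=\lfloor(k+1)/2\rfloor$ and $|E|=\lfloor k/2\rfloor$, I would first establish that every even-up word factors uniquely as $B_0\,o_1\,B_1\,o_2\cdots o_m\,B_m$, where $m\geq 0$, each $o_j\in O$, each block $B_j$ for $0\leq j\leq m-1$ is a (possibly empty) strictly increasing sequence drawn from $\{e\in E: e<o_{j+1}\}$, and the terminal block $B_m$ is a (possibly empty) strictly increasing sequence drawn from $E$.

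Next, I would translate each ingredient into a generating function in the length variable $x$. A strictly increasing sequence drawn from a pool of $s$ evens is simply a subset of that pool, so it contributes $(1+x)^s$; each odd letter contributes $x$. Grouping each block $B_{j-1}$ with the following $o_j$ and summing over $o_j\in O$ produces the factor $xS$, where
\[S\;=\;\sum_{o\in O}(1+x)^{|\{e\in E:\,e<o\}|}\;=\;\sum_{j=0}^{\lfloor(k+1)/2\rfloor - 1}(1+x)^j\;=\;\frac{(1+x)^{\lfloor(k+1)/2\rfloor}-1}{x},\]
the middle equality because the odd values $1,3,5,\ldots$ in $[k]$ have $0,1,2,\ldots$ evens strictly below them, respectively.

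Summing the resulting geometric series over $m\geq 0$ then gives
\[A_k(x)\;=\;(1+x)^{\lfloor k/2\rfloor}\sum_{m\geq 0}(xS)^m\;=\;\frac{(1+x)^{\lfloor k/2\rfloor}}{1-xS}\;=\;\frac{(1+x)^{\lfloor k/2\rfloor}}{2-(1+x)^{\lfloor(k+1)/2\rfloor}},\]
matching the claim. The only genuinely delicate point is the unique-factorisation step, specifically the asymmetry between the constrained blocks $B_0,\ldots,B_{m-1}$ (each capped by the following odd letter) and the unconstrained terminal block $B_m$, together with the degenerate case $m=0$ (when the word is a single, unconstrained, strictly increasing block of evens). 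Once this bookkeeping is set up correctly, the remaining computation is elementary.
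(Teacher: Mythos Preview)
Your proof is correct and takes a genuinely different route from the paper's. The paper introduces the refined generating functions $A_{k,i}(x)$ for even-up words ending in the letter $i$, sets up the linear system $A_{k,i}(x)=x+x\bigl(\sum_{j<i,\ j\text{ even}}A_{k,j}(x)+\sum_{j\text{ odd}}A_{k,j}(x)\bigr)$ with coefficient matrix $I-xM$, then \emph{guesses} the closed form $A_{k,i}(x)=x(x+1)^{\lfloor(i-1)/2\rfloor}/\bigl(2-(x+1)^{\lfloor(k+1)/2\rfloor}\bigr)$ and verifies it by a direct (somewhat lengthy) substitution, finally summing over $i$. Your argument, by contrast, is a self-contained bijective decomposition: splitting at the odd letters explains structurally why the numerator $(1+x)^{\lfloor k/2\rfloor}$ arises (the free terminal even block) and why the denominator is $1-xS=2-(1+x)^{\lfloor(k+1)/2\rfloor}$ (the geometric series over the block--odd pairs). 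Your approach is shorter, avoids the guess-and-verify step, and makes the form of the answer transparent; the paper's approach has the compensating advantage of producing the refined quantities $A_{k,i}(x)$ along the way, which are reused in the proofs of the cyclic variants later in the paper.
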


\begin{proof}
For $i\in [k]$ let $A_{k,i}(x) = \sum_{n\geq 1} a_{k,i,n}x^n$ be the generating function for the number of even-up words which end with $i$. We have 
\[a_{k,i,n+1}=\sum_{\substack{j\in[i-1]\\
j\text{ even}
}
}a_{k,j,n}+\sum_{\substack{j\in[k]\\
j\text{ odd}
}
}a_{k,j,n}.\] Multiplying both sides of the equation by $x^{n+1}$ and summing over $n\geq 1$, we obtain \[A_{k,i}(x)=x+x\left(\sum_{\substack{j\in[i-1]\\
j\text{ even}
}
}A_{k,j}(x)+\sum_{\substack{j\in[k]\\
j\text{ odd}
}
}A_{k,j}(x)\right).\] The corresponding coefficient matrix is then given by $I-xM$, where \[(M)_{ij}=
\begin{cases}
0, & \textnormal{if } i\leq j\text{ and }j\text{ is even};\\
1, & \text{otherwise}.
\end{cases}\] We claim that the solution of the system is given by \[A_{k,i}(x)=\frac{x(x+1)^{\lfloor \frac{i-1}{2}\rfloor}}{2-(x+1)^{\lfloor \frac{k+1}{2}\rfloor}},\;i\in[k].\]
Indeed, let $i\in[k]$ and assume that $k$ is even. We have 
\begin{align}
&A_{k,i}(x)-x\sum_{j=1}^{k}(M)_{ij}A_{k,j}(x)\nonumber\\&=A_{k,i}(x)-x\sum_{j=1}^{k}A_{k,j}(x)+x\sum_{j=\left\lceil \frac{i}{2}\right\rceil }^{\frac{k}{2}}A_{k,2j}(x)\nonumber\\
&=\frac{x}{2-(x+1)^{\frac{k}{2}}}\left((x+1)^{\left\lfloor \frac{i-1}{2}\right\rfloor }-x\sum_{j=1}^{k}(x+1)^{\left\lfloor \frac{j-1}{2}\right\rfloor }+x\sum_{j=\left\lceil \frac{i}{2}\right\rceil }^{\frac{k}{2}}(x+1)^{\left\lfloor \frac{2j-1}{2}\right\rfloor }\right)\nonumber\\
&=\frac{x}{2-(x+1)^{\frac{k}{2}}}\left((x+1)^{\left\lfloor \frac{i-1}{2}\right\rfloor }-x\sum_{j=0}^{k-1}(x+1)^{\left\lfloor \frac{j}{2}\right\rfloor }+x\sum_{j=\left\lceil \frac{i}{2}\right\rceil }^{\frac{k}{2}}(x+1)^{j-1}\right)\nonumber\\
&=\frac{x}{2-(x+1)^{\frac{k}{2}}}\left((x+1)^{\left\lfloor \frac{i-1}{2}\right\rfloor }-x\sum_{j=0}^{\frac{k}{2}-1}(x+1)^{j}-x\sum_{j=0}^{\left\lceil \frac{i}{2}\right\rceil -2}(x+1)^{j}\right)\nonumber\\
&=\frac{x}{2-(x+1)^{\frac{k}{2}}}\left(2-(x+1)^{\frac{k}{2}}-(x+1)^{\left\lceil \frac{i}{2}\right\rceil -1}+(x+1)^{\left\lfloor \frac{i-1}{2}\right\rfloor }\right)\nonumber\\
&=x.\nonumber
\end{align} To obtain $A_k(x)$, we sum $A_{k,i}(x)$ over $i\in[k]$. Hence,
\begin{align}
A_{k}(x)	&=1+\sum_{i=1}^{k}\frac{x(x+1)^{\left\lfloor \frac{i-1}{2}\right\rfloor }}{2-(x+1)^{\frac{k}{2}}}\nonumber\\
&=1+\frac{2x}{2-(x+1)^{\frac{k}{2}}}\sum_{i=0}^{\frac{k}{2}-1}(x+1)^{i}\nonumber\\
&=1-\frac{2\left(1-(x+1)^{\frac{k}{2}}\right)}{2-(x+1)^{\frac{k}{2}}}\nonumber\\
&=\frac{(x+1)^{\frac{k}{2}}}{2-(x+1)^{\frac{k}{2}}} \nonumber.
\end{align}
The case of odd $k$ is similar.
\end{proof}

\begin{table}[H]
\begin{center}
{\renewcommand{\arraystretch}{1.3}
\begin{tabular}{| c|c|c|c|c|c|c|c|c|c|c|c |c|}
 \hline
   $k/ n$& $0$ &$1$ &$2$ &$3$ &$4$ &$5$ &$6$ &$7$ &$8$ &$9$ &$10$ & OEIS\\ [0.5ex]
 \hline
 $1$&$1 $ &$1 $ &$1 $ &$1 $ &$1 $ &$1 $ &$1 $ &$1 $ &$1 $ &$1 $ &$1 $ & trivial  \\ \hline
 $2$&$1 $ &$2 $ &$2 $ &$2 $ &$2 $ &$2 $ &$2 $ &$2 $ &$2 $ &$2 $ &$2 $ &trivial \\ \hline
 $3$&$1 $ &$3 $ &$7 $ &$17 $ &$41 $ &$99 $ &$239 $ &$577 $ &$1393 $ &$3363 $ &$8119 $ &$\seqnum{A001333}$ \\ \hline
 $4$&$1 $ &$4 $ &$10 $ &$24 $ &$58 $ &$140 $ &$338 $ &$816 $ &$1970 $ &$4756 $ &$11482 $ & $\seqnum{A052542} $\\ \hline 
 $5$ & $1 $ &$5 $ &$19 $ &$73 $ &$281 $ &$1081 $ &$4159 $ &$16001 $ &$61561 $ &$236845 $ &$911219 $ &\seqnum{A377314}\\ \hline
 $6$&$1 $ &$6 $ &$24 $ &$92 $ &$354 $ &$1362 $ &$5240 $ &$20160 $ &$77562 $ &$298406 $ &$1148064 $ &$\seqnum{A108368}$ 
 \\ \hline
\end{tabular}
\caption{Number of even-up words.}\label{tablea2}}
\end{center}
\end{table}

\begin{theorem}
Let $B_k(x) = \sum_{n\geq 0} b_{k,n}x^n$ be the generating function for the number of odd-up words. Then
\[B_k(x)=\frac{(x+1)^{\left\lfloor \frac{k+1}{2}\right\rfloor }}{x+2-(x+1)^{\left\lfloor \frac{k+2}{2}\right\rfloor }}.\] 
\end{theorem}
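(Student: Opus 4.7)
The plan is to mirror the proof of Theorem \ref{t102}, exchanging the roles of even and odd digits. First, I would introduce $B_{k,i}(x) = \sum_{n\geq 1} b_{k,i,n}x^n$ for the generating function of odd-up words ending with $i$. Since an odd-up word of length $n+1$ ending with $i$ is formed by appending $i$ to one ending with $j$ where either $j$ is even (no constraint on the successor) or $j$ is odd with $j < i$, the analog of the recurrence in Theorem \ref{t102} reads
\[B_{k,i}(x) = x + x\left(\sum_{\substack{j\in[i-1]\\ j\text{ odd}}} B_{k,j}(x) + \sum_{\substack{j\in[k]\\ j\text{ even}}} B_{k,j}(x)\right),\]
with coefficient matrix $I - xM$ where $(M)_{ij} = 0$ iff $i \leq j$ and $j$ is odd, and $(M)_{ij}=1$ otherwise.

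Next, by analogy with the even-up case, I would guess the closed form
\[B_{k,i}(x) = \frac{x(x+1)^{\lfloor i/2\rfloor}}{x+2-(x+1)^{\lfloor (k+2)/2\rfloor}},\quad i\in[k],\]
the exponent in the numerator being $\lfloor i/2\rfloor$ rather than $\lfloor(i-1)/2\rfloor$, because in the odd-up setting the "restricted" letters start at $1$ instead of $2$. Two small-$k$ sanity checks support the guess: for $k=1$ the formula gives $1+x$, reflecting that a single odd letter cannot be followed by anything; for $k=2$ the constraint just forbids the substring $11$, and the formula collapses to $(x+1)/(1-x-x^2)$, the familiar Fibonacci-like generating function for binary words avoiding $11$.

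With the ansatz in hand, I would verify it by direct substitution into the recurrence, splitting according to the parity of $k$. The chain of equalities parallels the one in Theorem \ref{t102}: the inner sum $\sum_{j=\lceil i/2\rceil}^{k/2}(x+1)^{\lfloor(2j-1)/2\rfloor}$ there is replaced by the corresponding sum over odd indices $j \geq i$, and the geometric series $\sum_j (x+1)^j = ((x+1)^m-1)/x$ is re-applied to collapse the difference of sums. Finally, $B_k(x) = 1 + \sum_{i=1}^k B_{k,i}(x)$ is computed by grouping the numerator $\sum_{i=1}^k (x+1)^{\lfloor i/2\rfloor}$ into pairs (for $k$ even this is $2\sum_{j=0}^{k/2-1}(x+1)^j - 1 + (x+1)^{k/2}$, with the analogous regrouping for $k$ odd), after which telescoping against the denominator yields the claimed $(x+1)^{\lfloor(k+1)/2\rfloor}$ on top.

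The main obstacle I anticipate is pinning down the correct exponent shift $\lfloor i/2\rfloor$ in the numerator of $B_{k,i}(x)$ and matching boundary terms across the parity split of $k$; once the ansatz is fixed via the small-$k$ checks, the remaining verification is mechanical bookkeeping entirely parallel to Theorem \ref{t102}.
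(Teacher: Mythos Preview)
Your proposal is correct and is precisely the approach the paper intends: the theorem is stated without proof, with the blanket remark that such results ``are established using arguments similar in nature to those provided in the detailed proofs,'' i.e., by mirroring Theorem~\ref{t102} with the parity roles swapped. Your ansatz $B_{k,i}(x)=x(x+1)^{\lfloor i/2\rfloor}/(x+2-(x+1)^{\lfloor(k+2)/2\rfloor})$ is the right one, and your outlined verification and final summation go through exactly as you describe.
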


\begin{table}[H]
\begin{center}
{\renewcommand{\arraystretch}{1.3}
\begin{tabular}{| c|c|c|c|c|c|c|c|c|c|c|c |c|}
 \hline
   $k/ n$& $0$ &$1$ &$2$ &$3$ &$4$ &$5$ &$6$ &$7$ &$8$ &$9$ &$10$ & OEIS\\ [0.5ex]
 \hline

$1$&$1 $ &$1 $ &$0 $ &$0 $ &$0 $ &$0 $ &$0 $ &$0 $ &$0 $ &$0 $ &$0 $  &trivial\\ \hline

$2$&$1 $ &$2 $ &$3 $ &$5 $ &$8 $ &$13 $ &$21 $ &$34 $ &$55 $ &$89 $ &$144 $ &$\seqnum{A000045}$\\ \hline

$3$&$1 $ &$3 $ &$5 $ &$8 $ &$13 $ &$21 $ &$34 $ &$55 $ &$89 $ &$144 $ &$233 $ &$\seqnum{A000045}$\\ \hline

 $4$&$1 $ &$4 $ &$12 $ &$37 $ &$114 $ &$351 $ &$1081 $ &$3329 $ &$10252 $ &$31572 $ &$97229$&$\seqnum{A099098}$  \\ \hline

 $5$&$1 $ &$5 $ &$16 $ &$49 $ &$151 $ &$465 $ &$1432 $ &$4410 $ &$13581 $ &$41824 $ &$128801 $ &$\seqnum{A334293}$ \\ \hline

  $6$&$1 $ &$6 $ &$27 $ &$122 $ &$553 $ &$2505 $ &$11348 $ &$51408 $ &$232885 $ &$1055000 $ &$4779290 $ &- \\ \hline
\end{tabular}
\caption{Number of odd-up words.}\label{tablea3}}
\end{center}
\end{table}

\begin{theorem}
Let $F_k(x)=\sum_{n\geq 0} f_{k,n}x^n$ be the generating function for the number of cyclic even-up words. Then 
\[
F_k(x) =1+x\left(\left\lfloor \frac{k}{2}\right\rfloor -\left\lfloor \frac{k+1}{2}\right\rfloor\frac{ (x+1)^{\left\lfloor \frac{k-1}{2}\right\rfloor }}{(x+1)^{\left\lfloor \frac{k+1}{2}\right\rfloor }-2}\right)
\] 
\end{theorem}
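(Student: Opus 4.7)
The plan is to apply the transfer-matrix method. Define the $k \times k$ matrix $T$ by $T_{ij} = 1$ if and only if the even-up transition $j \to i$ is allowed (that is, $j$ is odd, or $j$ is even and $i > j$); this is the transpose of the matrix $M$ from the proof of Theorem~\ref{t102}. A cyclic even-up word of length $n \geq 2$ is then exactly a closed walk of length $n$ in the digraph of $T$, so $f_{k, n} = \mathrm{tr}(T^n)$ for $n \geq 2$. Combining this with the convention that the cyclic constraint is vacuous for $n \leq 1$ (yielding $f_{k, 0} = 1$ and $f_{k, 1} = k$) and the fact that $\mathrm{tr}(T) = \beta := \lfloor (k+1)/2 \rfloor$ counts the odd elements of $[k]$, I obtain
\[
F_k(x) = 1 + kx + \mathrm{tr}\bigl((I - xT)^{-1}\bigr) - k - \beta x.
\]

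I would then prove by induction on $k$ the identity $\det(I - xT) = 2 - (1+x)^\beta$. For even $k$, the $k$-th column of $T$ is zero, so cofactor expansion along that column reduces the determinant to the $(k-1)$-case with the same $\beta$. For odd $k$, both the $k$-th row and column of $T$ are all ones, and I write
\[
I - xT_k = \begin{pmatrix} I_{k-1} - xT_{k-1} & -x\mathbf{1} \\ -x\mathbf{1}^T & 1 - x \end{pmatrix}.
\]
The Schur-complement formula applied to this block matrix, together with the observation $\mathbf{1}^T (I - xT_{k-1})^{-1} \mathbf{1} = (A_{k-1}(x) - 1)/x$ (which follows from the vector identity $\mathbf{A}_{k-1}(x) = x(I - xT_{k-1})^{-1}\mathbf{1}$ underlying the proof of Theorem~\ref{t102}), yields $\det(I - xT_k) = \det(I - xT_{k-1})\bigl(1 - x A_{k-1}(x)\bigr)$. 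Substituting the explicit form of $A_{k-1}(x)$ from Theorem~\ref{t102} and simplifying $1 - x A_{k-1}(x) = (2 - (1+x)^{(k+1)/2})/(2 - (1+x)^{(k-1)/2})$ produces a clean telescoping with the inductive hypothesis, completing the induction.

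With the determinant in hand, the logarithmic-derivative identity $\mathrm{tr}((I - xT)^{-1}) = k - x \frac{d}{dx} \log \det(I - xT)$ delivers
\[
\mathrm{tr}\bigl((I - xT)^{-1}\bigr) = k + \frac{\beta x (1+x)^{\beta - 1}}{2 - (1+x)^\beta},
\]
and substituting this into the first displayed expression for $F_k(x)$, together with the identities $k - \beta = \lfloor k/2 \rfloor$ and $\beta - 1 = \lfloor (k-1)/2 \rfloor$ (valid for both parities of $k$), recovers the stated closed form. The principal technical obstacle is the odd-$k$ determinant computation; the Schur-complement expansion, combined with the already-derived formula of Theorem~\ref{t102}, is precisely what makes that step tractable.
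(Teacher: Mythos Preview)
Your approach is correct and genuinely different from the paper's. The paper argues combinatorially: it builds a recursion $F_{k+1}(x)$ in terms of $F_k(x)$ by tracking occurrences of the new top letter $k+1$, expresses $F_k(x)$ as a finite sum involving the $A_{2i}(x)$ and $A_{2i+1}(x)$ from Theorem~\ref{t102}, and then collapses that sum via creative telescoping with an explicitly guessed antidifference $G(i)$. Your route is purely linear-algebraic: you identify $f_{k,n}=\mathrm{tr}(T^n)$ for $n\ge 2$, establish $\det(I-xT)=2-(1+x)^{\beta}$ by induction on $k$ using a Schur complement (with the already-known $A_{k-1}(x)$ supplying the scalar $\mathbf{1}^T(I-xT_{k-1})^{-1}\mathbf{1}$), and then read off the trace via the logarithmic-derivative identity. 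What your method buys is systematicity: no telescoper needs to be guessed, and the same template would transfer directly to the other cyclic variants in the paper. What the paper's method buys is a bijective flavor and independence from matrix machinery. One minor slip: your $T$ actually \emph{equals} the paper's $M$, not its transpose (both have $(i,j)$ entry equal to $1$ iff $j$ is odd or $i>j$); this is harmless, since the quantity $\mathbf{1}^T(I-xT_{k-1})^{-1}\mathbf{1}$ and the determinant are transpose-invariant anyway, and your block decomposition for odd $k$ is stated correctly for $T$ as you defined it.
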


\begin{proof}
It will be convenient to modify $A_k(x)$ from Theorem \ref{t102} and replace it with $A_k(x)+1/x$. We count the number of cyclic even-up words over $k+1$ according to the number of occurrences and positions of the letter $k+1$. Notice that if $k+1$ is even, then the letter $k+1$ cannot be used. Thus, in this case, $F_{k+1}(x) = F_k(x) + x$. Now, assume that $k+1$ is odd. Then the letter $k+1$ can be placed at any place. There are $f_{k,n}$ cyclic even-up words without any $k+1$ and $nf_{k,n-1}$ such words with exactly one occurrence of $k+1$. Now assume that there are at least two occurrences of $k+1$ and denote by $i$ and $j$ the smallest and largest index of these $k+1$s, respectively. There are $a_{k+1,j-i-1}$ possibilities for the subword $w_{i+1}\cdots w_{j-1}$ and $a_{k,n-(j-i)-1}$ possibilities for the (cyclic) subword $w_{j+1}\cdots w_nw_1\cdots w_{i-1}$. It follows that 
\begin{align}
f_{k+1,n}	&=f_{k,n}+na_{k,n-1}+\sum_{i=1}^{n-1}\sum_{j=i+1}^{n}a_{k+1,j-i-1}a_{k,n-(j-i)-1}\nonumber\\
&=f_{k,n}+\sum_{i=1}^{n}ia_{k+1,n-i-1}a_{k,i-1}.\nonumber
\end{align} Multiplying both sides of the equation by $x^n$ and summing over $n\geq 1$, we obtain \[F_{k+1}(x)=F_{k}(x)+x^{2}A_{k+1}(x)\frac{d}{dx}\left(xA_{k}(x)\right).\] Thus,
\begin{align}
&F_{k}(x)\nonumber\\
&=1+\left\lfloor \frac{k}{2}\right\rfloor x+x^{2}\sum_{i=0}^{\left\lfloor \frac{k-1}{2}\right\rfloor }A_{2i+1}(x)\frac{d}{dx}(xA_{2i}(x))\nonumber\\
&=1+\left\lfloor \frac{k}{2}\right\rfloor x+x^{2}\sum_{i=0}^{\left\lfloor \frac{k-1}{2}\right\rfloor }\left(\frac{(x+1)^{i}}{2-(x+1)^{i+1}}+\frac{1}{x}\right)\frac{d}{dx}\left(\frac{x(x+1)^{i}}{2-(x+1)^{i}}+1\right)\nonumber\\
&=1+\left\lfloor \frac{k}{2}\right\rfloor x+x\sum_{i=0}^{\left\lfloor \frac{k-1}{2}\right\rfloor }\frac{(x+1)^{3i}-4(x+1)^{2i}-2ix(x+1)^{2i-1}+4(x+1)^{i}+4ix(x+1)^{i-1}}{(2-(x+1)^{i})^{2}(2-(x+1)^{i+1})}\nonumber.
\end{align} 
We now use the method of creative telescoping (cf.\ \cite{Z}). To this end, define \[G(i) = (i+1)\frac{(x+1)^{i}}{2-(x+1)^{i+1}}.\] It is straightforward to verify that $G(i)-G(i-1)$ is equal to the $i$th summand in the last sum. Thus, 
\begin{align}
F_k(x)&=1+\left\lfloor \frac{k}{2}\right\rfloor x+x\sum_{i=0}^{\left\lfloor \frac{k-1}{2}\right\rfloor }(G(i)-G(i-1))\nonumber\\
&=1+\left\lfloor \frac{k}{2}\right\rfloor x+x\left(G\left({\left\lfloor \frac{k-1}{2}\right\rfloor }\right)-\overbrace{G(-1)}^{=0}\right)\nonumber\\
&=1+x\left(\left\lfloor \frac{k}{2}\right\rfloor +\left\lfloor \frac{k+1}{2}\right\rfloor\frac{ (x+1)^{\left\lfloor \frac{k-1}{2}\right\rfloor }}{2-(x+1)^{\left\lfloor \frac{k+1}{2}\right\rfloor }}\right).\nonumber\qedhere
\end{align}
\end{proof}

\begin{table}[H]
\begin{center}
{\renewcommand{\arraystretch}{1.3}
\begin{tabular}{| c|c|c|c|c|c|c|c|c|c|c|c |c|}
 \hline
   $k/ n$& $0$ &$1$ &$2$ &$3$ &$4$ &$5$ &$6$ &$7$ &$8$ &$9$ &$10$ & OEIS\\ [0.5ex]
 \hline

$1$&   $1$&$ 1$&$ 1$&$ 1$&$ 1$&$ 1$&$ 1$&$ 1$&$ 1$&$ 1$&$ 1     $ &trivial\\ \hline

$2$& $  1$&$ 2$&$ 1$&$ 1$&$ 1$&$ 1$&$ 1$&$ 1$&$ 1$&$ 1$&$ 1    $ &trivial\\ \hline

$3$& $  1$&$ 3$&$ 6$&$ 14$&$ 34$&$ 82$&$ 198$&$ 478$&$ 1154$&$ 2786$&$ 6726   $ &\seqnum{A002203}\\ \hline

$4$& $  1$&$ 4$&$ 6$&$ 14$&$ 34$&$ 82$&$ 198$&$ 478$&$ 1154$&$ 2786$&$ 6726   $ &\seqnum{A002203} \\ \hline

$5$& $  1$&$ 5$&$ 15$&$ 57$&$ 219$&$ 843$&$ 3243$&$ 12477$&$ 48003$&$ 184683$&$ 710535   $ &- \\ \hline

$6$& $  1$&$ 6$&$ 15$&$ 57$&$ 219$&$ 843$&$ 3243$&$ 12477$&$ 48003$&$ 184683$&$ 710535   $ &- \\ \hline 
\end{tabular}
\caption{Number of cyclic even-up words.}\label{tablea4}}
\end{center}
\end{table}

\begin{theorem}
Let $F_k(x)=\sum_{n\geq 0} f_{k,n}x^n$ be the generating function for the number of cyclic odd-up words. Then 
\[
F_k(x) =1+x\left(\left\lfloor \frac{k+1}{2}\right\rfloor +\frac{\left\lfloor \frac{k+2}{2}\right\rfloor (x+1)^{\left\lfloor \frac{k}{2}\right\rfloor }-1}{x+2-(x+1)^{\left\lfloor \frac{k+2}{2}\right\rfloor }}\right).
\]
\end{theorem}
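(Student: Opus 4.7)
The plan is to mirror the strategy of the preceding (cyclic even-up) proof, now conditioning on the positions of the new letter $k+1$ when passing from alphabet $[k]$ to $[k+1]$. The roles of the parities are swapped: if $k+1$ is odd then placing $k+1$ at any position of a word of length $\ge 2$ forces the next letter to exceed $k+1$, which is impossible, so $k+1$ is only usable as a single-letter word and $F_{k+1}(x)=F_k(x)+x$. If $k+1$ is even then $k+1$ imposes no odd-up constraint on its successor and automatically satisfies any constraint coming from an odd predecessor, so it may be placed at every position.

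Assuming $k+1$ is even, I decompose cyclic odd-up words of length $n$ over $[k+1]$ by the smallest and largest positions of $k+1$, in direct analogy with the previous proof. No occurrence of $k+1$ contributes $f_{k,n}$; a single occurrence at position $i$ leaves a length-$(n-1)$ linear odd-up word over $[k]$ (the cyclic wrap being broken by the $k+1$), contributing $n\cdot b_{k,n-1}$ in total; and at least two occurrences at smallest/largest indices $i<j$ split the word into an odd-up subword of length $j-i-1$ over $[k+1]$ and an odd-up subword of length $n-(j-i)-1$ over $[k]$. Summing and adopting the convention $b_{k,-1}=1$ (i.e.\ replacing $B_k(x)$ by $B_k(x)+1/x$), the recurrence collapses to
\[F_{k+1}(x)=F_k(x)+x^2\!\left(B_{k+1}(x)+\frac{1}{x}\right)\frac{d}{dx}\bigl(xB_k(x)\bigr).\]

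Iterating from the base $F_1(x)=1+x$ (over the one-letter alphabet only the empty word and the single letter $1$ survive), the increments from odd new letters contribute a total of $\lfloor(k-1)/2\rfloor\,x$, which combined with $F_1$ produces the $1+\lfloor(k+1)/2\rfloor\,x$ part of the claimed formula. The remaining contributions from even new letters sum to
\[\sum_{m=1}^{\lfloor k/2\rfloor}x^2\!\left(B_{2m}(x)+\frac{1}{x}\right)\frac{d}{dx}\bigl(xB_{2m-1}(x)\bigr),\]
which I plan to evaluate by creative telescoping paralleling the previous proof. My candidate antidifference is
\[H(m)=\frac{(m+1)(x+1)^m-1}{x+2-(x+1)^{m+1}},\]
which satisfies $H(0)=0$ and, as can be checked by a direct rational-function computation using the explicit formulas $B_{2m-1}(x)=(x+1)^m/(x+2-(x+1)^m)$ and $B_{2m}(x)=(x+1)^m/(x+2-(x+1)^{m+1})$, also satisfies $H(m)-H(m-1)=x(B_{2m}(x)+1/x)\,\frac{d}{dx}(xB_{2m-1}(x))$. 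The sum then telescopes to $xH(\lfloor k/2\rfloor)$, which is precisely the second term of the claimed formula (using $\lfloor k/2\rfloor+1=\lfloor(k+2)/2\rfloor$). The principal obstacle is identifying the correct $H(m)$; once proposed, its verification is a routine if tedious calculation.
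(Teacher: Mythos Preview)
Your proposal is correct and follows precisely the approach the paper intends: the paper omits the proof of this theorem, remarking that it is established by arguments similar to the detailed cyclic even-up proof, and you have carried out exactly that parity-swapped analogue, including the decomposition by occurrences of the new letter, the derivative-product recurrence for $F_{k+1}$ in terms of $B_{k+1}$ and $B_k$, and a creative-telescoping evaluation with an antidifference $H(m)$ tailored to the odd-up generating functions.
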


\begin{table}[H]
\begin{center}
{\renewcommand{\arraystretch}{1.3}
\begin{tabular}{| c|c|c|c|c|c|c|c|c|c|c|c |c|}
 \hline
   $k/ n$& $0$ &$1$ &$2$ &$3$ &$4$ &$5$ &$6$ &$7$ &$8$ &$9$ &$10$ & OEIS\\ [0.5ex]
 \hline

    $1$& $     1$&$ 1$&$ 0$&$ 0$&$ 0$&$ 0$&$ 0$&$ 0$&$ 0$&$ 0$&$ 0    $ & trivial\\ \hline
    $2$& $    1$&$ 2$&$ 3$&$ 4$&$ 7$&$ 11$&$ 18$&$ 29$&$ 47$&$ 76$&$ 123  $ & \seqnum{A000032}\\ \hline

    $3$& $    1$&$ 3$&$ 3$&$ 4$&$ 7$&$ 11$&$ 18$&$ 29$&$ 47$&$ 76$&$ 123 $ & \seqnum{A000032}\\ \hline

    $4$& $  1$&$ 4$&$ 10$&$ 29$&$ 90$&$ 277$&$ 853$&$ 2627$&$ 8090$&$ 24914$&$ 76725 $ & -\\ \hline

    $5$& $  1$&$ 5$&$ 10$&$ 29$&$ 90$&$ 277$&$ 853$&$ 2627$&$ 8090$&$ 24914$&$ 76725 $ & -\\ \hline

    $6$& $  1$&$ 6$&$ 21$&$ 93$&$ 421$&$ 1908$&$ 8643$&$ 39154$&$ 177373$&$ 803523$&$ 3640066 $ & - \\ \hline 
\end{tabular}
\caption{Number of cyclic odd-up words.}\label{tablea5}}
\end{center}
\end{table}

\begin{theorem}
Let $A_k(x) = \sum_{n\geq 0} a_{k,n}x^n$ be the generating function for the number of weakly even-up words. Then
\[A_k(x)=\begin{cases}
\frac{1}{(1-x)^{\frac{k+2}{2}}+(1-x)^{\frac{k+2}{2}-1}+x-1}, & \textnormal{if \ensuremath{k} is even};\\
\frac{1}{(1-x)^{\frac{k+1}{2}}+(1-x)^{\frac{k+1}{2}-1}-1}, & \textnormal{if \ensuremath{k} is odd}.
\end{cases}\] 
\end{theorem}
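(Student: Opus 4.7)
The plan is to follow the strategy of the proof of Theorem \ref{t102}, adjusted for the weak inequality. For each $i\in[k]$ let $A_{k,i}(x)=\sum_{n\geq 1}a_{k,i,n}x^n$ be the generating function for weakly even-up words ending in $i$. Conditioning on the penultimate letter yields
\[
A_{k,i}(x)=x+x\!\left(\sum_{\substack{j\leq i\\ j\text{ even}}}A_{k,j}(x)+\sum_{\substack{j\in[k]\\ j\text{ odd}}}A_{k,j}(x)\right),
\]
which differs from the strict system in Theorem \ref{t102} only in that the first sum now runs up to $j\leq i$ instead of $j\leq i-1$.

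The key step is then to exploit two very simple identities obtained by subtracting consecutive members of this system. Comparing the equations for $i=2m$ and $i=2m+1$ (whenever both lie in $[k]$) shows that their right-hand sums coincide, hence $A_{k,2m+1}(x)=A_{k,2m}(x)$. Comparing the equations for $i=2m-1$ and $i=2m$ gives $A_{k,2m}(x)-A_{k,2m-1}(x)=xA_{k,2m}(x)$, and so $A_{k,2m-1}(x)=(1-x)A_{k,2m}(x)$. Chaining these together produces $A_{k,2m+2}(x)=(1-x)^{-1}A_{k,2m}(x)$, which expresses every $A_{k,i}(x)$ as a monomial in $(1-x)$ times a single unknown $C:=A_{k,2}(x)$; explicitly $A_{k,2m}(x)=(1-x)^{1-m}C$ and $A_{k,2m-1}(x)=(1-x)^{2-m}C$ for each admissible $m$.

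To pin down $C$ I substitute these expressions into the remaining equation for $A_{k,1}(x)$, which involves only the odd-indexed generating functions; summing the resulting finite geometric series in $(1-x)^{-1}$ over $m=1,\ldots,\lceil k/2\rceil$ yields a linear equation whose solution is an explicit rational function of $x$. Then $A_k(x)=1+\sum_{i=1}^{k}A_{k,i}(x)$ is obtained by summing two further finite geometric series, one over $m=1,\ldots,\lfloor k/2\rfloor$ (even letters) and one over $m=1,\ldots,\lceil k/2\rceil$ (odd letters). The two cases in the stated formula arise from this asymmetry: when $k$ is even the two ranges coincide, while when $k$ is odd there is one additional odd letter, and this imbalance is precisely what accounts for the difference between the $x-1$ appearing in the even-$k$ denominator and the bare $-1$ appearing in the odd-$k$ one. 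The main technical obstacle I anticipate is the final algebraic simplification: the natural output involves a term of the shape $(2-x)(1-x)^{\lfloor k/2\rfloor}$, which must be recast via the identity $2-x=1+(1-x)$ as $(1-x)^{\lfloor k/2\rfloor+1}+(1-x)^{\lfloor k/2\rfloor}$ in order to recover the compact form displayed in the theorem.
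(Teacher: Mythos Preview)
Your proposal is correct and follows the paper's overall template: the paper states this theorem without proof, indicating that it is handled ``using arguments similar in nature'' to Theorem~\ref{t102}, i.e., by setting up and solving the linear system for the refined generating functions $A_{k,i}(x)$. You set up exactly that system (with the single change $j\le i$ in place of $j\le i-1$) and carry it through.

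Where you depart slightly from the paper's template is in how you solve the system. The proof of Theorem~\ref{t102} posits a closed form for each $A_{k,i}(x)$ and verifies it by substitution; the (suppressed) sketch the paper records for the present theorem does the same, working instead with words classified by their \emph{first} letter. You instead derive the solution constructively: subtracting consecutive equations yields $A_{k,2m+1}=A_{k,2m}$ and $A_{k,2m-1}=(1-x)A_{k,2m}$, which reduces everything to a single unknown and a geometric sum. This is a clean and fully rigorous alternative that avoids having to guess the answer; the paper's verification style, on the other hand, makes the final check mechanical once the formula is known. Both routes lead to the same denominator $(2-x)(1-x)^{\lceil k/2\rceil}-\varepsilon$ with $\varepsilon\in\{1,1-x\}$, and your anticipated use of $2-x=(1-x)+1$ is exactly what is needed to match the stated form. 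One small caveat: your choice $C=A_{k,2}(x)$ presupposes $k\ge 2$; the case $k=1$ is trivial and should be noted separately.
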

\begin{table}[H]
\begin{center}
{\renewcommand{\arraystretch}{1.3}
\begin{tabular}{| c|c|c|c|c|c|c|c|c|c|c|c |c|}
 \hline
   $k/ n$& $0$ &$1$ &$2$ &$3$ &$4$ &$5$ &$6$ &$7$ &$8$ &$9$ &$10$ & OEIS\\ [0.5ex]
 \hline
$1$&$1$&$1$&$1$&$1$&$1$&$1$&$1$&$1$&$1$&$1$&$1$&trivial\\ \hline 
$2$&$1$&$2$&$3$&$4$&$5$&$6$&$7$&$8$&$9$&$10$&$11$&trivial\\ \hline 
$3$&$1$&$3$&$8$&$21$&$55$&$144$&$377$&$987$&$2584$&$6765$&$17711$&\seqnum{A001906}\\ \hline 
$4$&$1$&$4$&$12$&$33$&$88$&$232$&$609$&$1596$&$4180$&$10945$&$28656$&\seqnum{A027941}\\ \hline 
$5$&$1$&$5$&$21$&$86$&$351$&$1432$&$5842$&$23833$&$97229$&$396655$&$1618192$&\seqnum{A012814}\\ \hline 
$6$&$1$&$6$&$27$&$113$&$464$&$1896$&$7738$&$31571$&$128800$&$525455$&$2143647$&\seqnum{A176476}\\ \hline 
\end{tabular}
\caption{Number of weakly even-up words.}\label{tablea6}}
\end{center}
\end{table}

\begin{theorem}
Let $B_k(x) = \sum_{n\geq 0} b_{k,n}x^n$ be the generating function for the number of weakly odd-up words. Then
\[B_k(x)=\begin{cases}
\frac{1}{2(1-x)^{\frac{k}{2}}-1}, & \textnormal{if \ensuremath{k} is even};\\
\frac{1}{2(1-x)^{\frac{k+1}{2}}+x-1}, & \textnormal{if \ensuremath{k} is odd}.
\end{cases}\] 
\end{theorem}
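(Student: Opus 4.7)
The proof will follow the same template as Theorem~\ref{t102}, refining by the last letter. For $i\in[k]$ let $B_{k,i}(x) = \sum_{n\geq 1} b_{k,i,n}x^n$ be the generating function for weakly odd-up words of length $n$ ending with the letter $i$. The key observation is that if such a word ends with $i$, then its penultimate letter $j$ can be either any even letter in $[k]$ (no constraint applies) or any odd letter with $j\leq i$ (so that $w_n = i \geq j = w_{n-1}$). This yields
\[
b_{k,i,n+1} = \sum_{\substack{j\in[k]\\ j\text{ even}}} b_{k,j,n} + \sum_{\substack{j\in[i]\\ j\text{ odd}}} b_{k,j,n},
\]
and multiplying by $x^{n+1}$ and summing over $n\geq 1$ (with $b_{k,i,1}=1$) gives a linear system for the $B_{k,i}(x)$ whose coefficient matrix is $I - xN$ with $N_{ij} = 1$ exactly when $j$ is even, or $j$ is odd and $j\leq i$.

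A preliminary simplification is that the right-hand side for $i = 2l-1$ coincides with that for $i = 2l$, so $B_{k,2l-1}(x) = B_{k,2l}(x)$, cutting the number of independent unknowns roughly in half. I would then conjecture a closed form for $B_{k,i}(x)$ analogous to the one in Theorem~\ref{t102}, most likely of the shape
\[
B_{k,i}(x) = \frac{x\,(1-x)^{\lfloor (k-i)/2\rfloor}}{D_k(x)},
\]
where $D_k(x)$ is exactly the denominator appearing in the statement of the theorem (with the even/odd $k$ cases treated separately), and verify the guess by direct substitution into the system, using the finite geometric sums $\sum_{j=0}^{m-1}(1-x)^j = \bigl(1-(1-x)^m\bigr)/x$ to collapse the even-letter and odd-letter contributions.

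Once the closed form for $B_{k,i}(x)$ is confirmed, I would obtain $B_k(x) = 1 + \sum_{i=1}^k B_{k,i}(x)$ by another geometric-series summation. Writing the pairs $\{B_{k,2l-1}, B_{k,2l}\}$ together doubles the sum, and the resulting expression simplifies so that the numerator $x\sum (1-x)^{\lfloor (k-i)/2\rfloor}$ combines with $D_k(x)$ to produce the quotients $\tfrac{1}{2(1-x)^{k/2}-1}$ (for $k$ even) and $\tfrac{1}{2(1-x)^{(k+1)/2}+x-1}$ (for $k$ odd).

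The main obstacle will be pinning down the correct closed form for $B_{k,i}(x)$: unlike the even-up case where the unrestricted letter class (odd letters) contributes a constant term independent of $i$, here the asymmetry between the even letters (always allowed as predecessors) and the odd letters (allowed only up to $i$) means one must carefully track the parity of both $i$ and $k$. Once that form is guessed, the verification is a routine, if slightly tedious, exercise in geometric sums entirely parallel to the computation in the proof of Theorem~\ref{t102}.
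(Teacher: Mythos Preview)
Your outline is correct and is exactly the approach the paper intends: the theorem is stated without proof, with the blanket remark that unproved results follow the template of Theorem~\ref{t102}, and your refinement by last letter, guess-and-verify for $B_{k,i}(x)$, and final geometric summation reproduce that template precisely. One small correction to your guessed closed form: the exponent $\lfloor (k-i)/2\rfloor$ works for even $k$ but contradicts your own observation $B_{k,2l-1}=B_{k,2l}$ when $k$ is odd; the exponent that works uniformly is $\lceil k/2\rceil-\lceil i/2\rceil$ (equivalently $\lceil (k-i)/2\rceil$), after which the verification and the summation to $B_k(x)$ go through exactly as you describe.
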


\begin{table}[H]
\begin{center}
{\renewcommand{\arraystretch}{1.3}
\begin{tabular}{| c|c|c|c|c|c|c|c|c|c|c|c |c|}
 \hline
   $k/ n$& $0$ &$1$ &$2$ &$3$ &$4$ &$5$ &$6$ &$7$ &$8$ &$9$ &$10$ & OEIS\\ [0.5ex]
 \hline
$1$&$1$&$1$&$1$&$1$&$1$&$1$&$1$&$1$&$1$&$1$&$1$&trivial\\ \hline 
$2$&$1$&$2$&$4$&$8$&$16$&$32$&$64$&$128$&$256$&$512$&$1024$&\seqnum{A000079}\\ \hline 
$3$&$1$&$3$&$7$&$15$&$31$&$63$&$127$&$255$&$511$&$1023$&$2047$&\seqnum{A000225}\\ \hline 
$4$&$1$&$4$&$14$&$48$&$164$&$560$&$1912$&$6528$&$22288$&$76096$&$259808$&\seqnum{A007070}\\ \hline 
$5$&$1$&$5$&$19$&$67$&$231$&$791$&$2703$&$9231$&$31519$&$107615$&$367423$&\seqnum{A035344}\\ \hline 
$6$&$1$&$6$&$30$&$146$&$708$&$3432$&$16636$&$80640$&$390888$&$1894760$&$9184512$&\seqnum{A145839}\\ \hline 
\end{tabular}
\caption{Number of weakly odd-up words.}\label{tablea7}}
\end{center}
\end{table}

\begin{theorem}
Let $F_k(x)=\sum_{n\geq 0} f_{k,n}x^n$ be the generating function for the number of cyclic weakly even-up words. Then 
\[
F_k(x) = \left\lfloor \frac{k}{2}\right\rfloor \frac{x}{1-x}+\frac{2(1-x)^{\left\lceil \frac{k}{2}\right\rceil }+\left\lceil \frac{k}{2}\right\rceil x-1}{(2-x)(1-x)^{\left\lceil \frac{k}{2}\right\rceil }+x-1}.
\]
\end{theorem}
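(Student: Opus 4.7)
The plan is to emulate the proof of the cyclic even-up formula and derive a two-case recurrence for $F_k(x)$ on the parity of $k+1$, then close the computation by induction on $\lceil k/2\rceil$.

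For $k+1$ even, any occurrence of $k+1$ in a cyclic weakly even-up word propagates around the cycle to force every letter to equal $k+1$, so the only new words are the constant-$(k+1)$ words of each positive length, and $F_{k+1}(x)=F_k(x)+x/(1-x)$. For $k+1$ odd, the letter $k+1$ is unconstrained as a successor (it is odd) and dominates any even predecessor (it is the maximum), so the smallest/largest-index analysis used for cyclic even-up applies verbatim. Folding the single-occurrence term into the double sum (via the convention $a_{k,-1}=1$, equivalently $A_k\mapsto A_k+1/x$, used there) gives
\[
F_{k+1}(x) = F_k(x) + x\bigl(1+xA_{k+1}(x)\bigr)\frac{d}{dx}\bigl(xA_k(x)\bigr),
\]
where $A_k(x)$ now denotes the weakly even-up generating function.

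Iterating from $F_0(x)=1$, the $\lfloor k/2\rfloor$ even steps up to $k$ contribute exactly the $\lfloor k/2\rfloor\cdot x/(1-x)$ summand in the theorem, so it remains to show that the fractional term
\[
H_L(x) := \frac{2(1-x)^L+Lx-1}{(2-x)(1-x)^L+x-1}
\]
satisfies $H_0(x)=1$ (directly checked) and
\[
H_L(x)-H_{L-1}(x)=x\bigl(1+xA_{2L-1}(x)\bigr)\frac{d}{dx}\bigl(xA_{2L-2}(x)\bigr)
\]
for every $L\geq 1$, with $L=\lceil k/2\rceil$. Setting $y=1-x$ and $P_L:=y^{L+1}+y^L-y$, the explicit weakly even-up formulas collapse to the clean identities
\[
A_{2L-1}(x)=\frac{y}{P_L},\qquad A_{2L-2}(x)=\frac{1}{P_{L-1}},\qquad 1+xA_{2L-1}(x)=\frac{yP_{L-1}}{P_L},
\]
and the common denominator of $H_L-H_{L-1}$ is likewise $P_LP_{L-1}$. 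The identity then reduces to a polynomial equality in $y$ and $L$ that I expect to verify after clearing denominators and using $x+y=1$.

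The main obstacle is this final algebraic verification. Unlike the cyclic even-up case, where $A_k(x)$ has the single denominator term $2-(x+1)^{\lfloor(k+1)/2\rfloor}$ and admits the transparent telescoping witness $G(i)=(i+1)(x+1)^i/(2-(x+1)^{i+1})$, the weakly even-up denominator couples $y^L$ and $y^{L-1}$. Differentiating $xA_{2L-2}(x)=x/P_{L-1}$ therefore introduces the mixed polynomial $P_{L-1}-xP_{L-1}'$, which must then combine exactly with $1+xA_{2L-1}$ and match $H_L-H_{L-1}$. Small-$L$ checks for $L=1,2,3$ confirm that the identity collapses, but isolating a clean general proof — either by induction on $L$ after multiplying through by $P_LP_{L-1}$, or by exhibiting a creative-telescoping witness $G(L)$ analogous to that in the cyclic even-up case — is the principal work.
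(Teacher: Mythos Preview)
Your proposal is correct and follows essentially the approach the paper intends (the theorem is stated without proof, with the blanket remark that omitted proofs mirror the detailed ones, here the cyclic even-up case). Your parity analysis, the recurrence $F_{k+1}=F_k+x(1+xA_{k+1})\,\frac{d}{dx}(xA_k)$ for $k+1$ odd, the identification $A_{2L-1}=y/P_L$, $A_{2L-2}=1/P_{L-1}$, $1+xA_{2L-1}=yP_{L-1}/P_L$, and the denominator $(2-x)(1-x)^L+x-1=P_L$ are all right, and the reduction to the polynomial identity
\[
(2y^L+L(1-y)-1)P_{L-1}-(2y^{L-1}+(L-1)(1-y)-1)P_L=(1-y)\,y\,\bigl(P_{L-1}+(1-y)P_{L-1}'\bigr)
\]
is exactly the verification that $H_L$ is the telescoping witness, i.e.\ the analogue of the paper's $G(i)$; once both sides are expanded this is a routine equality of polynomials in $y$ with coefficients linear in $L$, so your ``principal work'' is in fact just bookkeeping rather than a genuine obstacle.
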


\begin{table}[H]
\begin{center}
{\renewcommand{\arraystretch}{1.3}
\begin{tabular}{| c|c|c|c|c|c|c|c|c|c|c|c |c|}
 \hline
   $k/ n$& $0$ &$1$ &$2$ &$3$ &$4$ &$5$ &$6$ &$7$ &$8$ &$9$ &$10$ & OEIS\\ [0.5ex]
 \hline
$1$&$1$&$1$&$1$&$1$&$1$&$1$&$1$&$1$&$1$&$1$&$1$&trivial\\ \hline 
$2$&$1$&$2$&$2$&$2$&$2$&$2$&$2$&$2$&$2$&$2$&$2$&trivial\\ \hline 
$3$&$1$&$3$&$7$&$18$&$47$&$123$&$322$&$843$&$2207$&$5778$&$15127$&\seqnum{A005248}\\ \hline 
$4$&$1$&$4$&$8$&$19$&$48$&$124$&$323$&$844$&$2208$&$5779$&$15128$&\seqnum{A065034}\\ \hline 
$5$&$1$&$5$&$17$&$68$&$277$&$1130$&$4610$&$18807$&$76725$&$313007$&$1276942$&-\\ \hline 
$6$&$1$&$6$&$18$&$69$&$278$&$1131$&$4611$&$18808$&$76726$&$313008$&$1276943$&-\\ \hline 
\end{tabular}
\caption{Number of cyclic weakly even-up words.}\label{tablea8}}
\end{center}
\end{table}

\begin{theorem}
Let $F_k(x)=\sum_{n\geq 0} f_{k,n}x^n$ be the generating function for the number of cyclic weakly odd-up words. Then 
\[
F_k(x) =\left\lceil \frac{k}{2}\right\rceil \frac{x}{1-x}+\frac{2(1-x)^{\left\lceil \frac{k+1}{2}\right\rceil }+\left\lceil \frac{k+2}{2}\right\rceil x-1}{2(1-x)^{\left\lceil \frac{k+1}{2}\right\rceil -1}+x-1}.
\]
\end{theorem}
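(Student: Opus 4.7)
The plan is to mirror the strategy of the preceding cyclic even-up theorem. I would work over the alphabet $[k+1]$ and enumerate cyclic weakly odd-up words by the occurrences and positions of the largest letter $k+1$. The key case split is by the parity of $k+1$, which determines whether the weakly odd-up constraint at a position holding the letter $k+1$ is vacuous or restrictive.

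When $k+1$ is odd, that constraint forces the successor of any $k+1$ to be $\geq k+1$, hence $=k+1$; propagating around the cycle, either $k+1$ does not appear, or every letter is $k+1$, yielding the clean recurrence $F_{k+1}(x)=F_k(x)+x/(1-x)$. When $k+1$ is even, the letter $k+1$ imposes no constraint on its successor and automatically satisfies the constraint from its (necessarily odd) predecessor, so it can be placed freely. I would then carry out the same extreme-position decomposition as in the cyclic even-up proof: no occurrence of $k+1$ contributes $f_{k,n}$; a single occurrence at any of $n$ positions contributes $n\,b_{k,n-1}$; and with at least two occurrences, the extreme positions $i<j$ split the word into an inner linear word over $[k+1]$ of length $j-i-1$ and an outer linear word over $[k]$ of length $n-(j-i)-1$, contributing $b_{k+1,j-i-1}\,b_{k,n-(j-i)-1}$. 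Absorbing the single-occurrence term into the double sum via $\tilde B_{k+1}(x):=B_{k+1}(x)+1/x$, exactly as the cyclic even-up proof uses $\tilde A_k:=A_k+1/x$, delivers
\[
F_{k+1}(x)-F_k(x)\;=\;x^{2}\tilde B_{k+1}(x)\,\frac{d}{dx}\bigl(xB_k(x)\bigr).
\]
Iterating from $F_0(x)=1$ separates the contributions cleanly: the $\lceil k/2\rceil$ steps where $k+1$ is odd each contribute $x/(1-x)$, matching the first term of the claimed formula, while the $\lfloor k/2\rfloor$ steps where $k+1$ is even contribute a sum $\sum_{i=0}^{\lfloor k/2\rfloor - 1} x^{2}\tilde B_{2i+2}(x)\,(xB_{2i+1}(x))'$.

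The final step is to collapse this second sum to a closed form via creative telescoping, exactly as in the cyclic even-up proof. Substituting the explicit expressions $B_{2i+1}(x)=1/(2(1-x)^{i+1}+x-1)$ and $B_{2i+2}(x)=1/(2(1-x)^{i+1}-1)$ and performing the differentiation, I would look for a function $G(i)$ whose successive differences $G(i)-G(i-1)$ reproduce the $i$-th summand, by analogy with $G(i)=(i+1)(x+1)^i/(2-(x+1)^{i+1})$ used in the cyclic even-up proof; I expect $G$ here to take the similar shape $(i+1)(1-x)^{\alpha(i)}/(2(1-x)^{\beta(i)}+x-1)$. The main obstacle is pinning down this $G$: because $B_{2i+1}$ and $B_{2i+2}$ have slightly different denominator forms (one carries a $+x-1$ correction, the other just $-1$), the algebraic identification is messier than in the cyclic even-up setting, where both $A$-GFs at consecutive $k$ shared the same denominator family, and most of the work will go into this step. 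Once $G$ is found and $G(i)-G(i-1)$ is verified to match the summand, the sum collapses to $G(\lfloor k/2\rfloor-1)-G(-1)$; adding back $\lceil k/2\rceil\cdot x/(1-x)$ and simplifying should recover the stated closed form.
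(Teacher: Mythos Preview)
Your proposal is correct and follows precisely the route the paper indicates: the theorem is stated without proof, with the paper remarking that omitted proofs ``are established using arguments similar in nature to those provided in the detailed proofs,'' and you have faithfully transported the cyclic even-up argument to the weakly odd-up setting. Your parity analysis is right --- when $k+1$ is odd the weak constraint forces propagation around the cycle, giving $F_{k+1}=F_k+x/(1-x)$ (and this is confirmed by the tables, where consecutive odd-over-even rows differ by $0,1,1,1,\ldots$); when $k+1$ is even the extreme-position decomposition with the shifted $\tilde B_{k+1}=B_{k+1}+1/x$ yields $F_{k+1}-F_k=x^2\tilde B_{k+1}(x)\,(xB_k(x))'$, exactly paralleling the paper's $A$-based identity, after which the sum over even steps is collapsed by a creative-telescoping certificate $G(i)$ of the shape you anticipate.
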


\begin{table}[H]
\begin{center}
{\renewcommand{\arraystretch}{1.3}
\begin{tabular}{| c|c|c|c|c|c|c|c|c|c|c|c |c|}
 \hline
   $k/ n$& $0$ &$1$ &$2$ &$3$ &$4$ &$5$ &$6$ &$7$ &$8$ &$9$ &$10$ & OEIS\\ [0.5ex]
 \hline
$1$&$1$&$1$&$1$&$1$&$1$&$1$&$1$&$1$&$1$&$1$&$1$&trivial\\ \hline 
$2$&$1$&$2$&$4$&$8$&$16$&$32$&$64$&$128$&$256$&$512$&$1024$&\seqnum{A000079}\\ \hline 
$3$&$1$&$3$&$5$&$9$&$17$&$33$&$65$&$129$&$257$&$513$&$1025$&\seqnum{A000051}\\ \hline 
$4$&$1$&$4$&$12$&$40$&$136$&$464$&$1584$&$5408$&$18464$&$63040$&$215232$&\seqnum{A056236}\\ \hline 
$5$&$1$&$5$&$13$&$41$&$137$&$465$&$1585$&$5409$&$18465$&$63041$&$215233$&-\\ \hline 
$6$&$1$&$6$&$24$&$114$&$552$&$2676$&$12972$&$62880$&$304800$&$1477464$&$7161744$&-\\ \hline  
\end{tabular}
\caption{Number of cyclic weakly odd-up words.}\label{tablea9}}
\end{center}
\end{table}

\begin{theorem}\label{t0}
Let $A(x)=\sum_{n\geq 0} a_nx^n$ and $B(x)=\sum_{n\geq 0} b_nx^n$ be the generating functions for the number of weakly even-up and weakly odd-up Catalan words, respectively. Then 
\begin{align}
A(x)&=\frac{1-x-\sqrt{(1+x^{2})^{2}-4x}}{x},\nonumber\\
B(x)&=-\frac{x^{3}+2x^{2}+x-2+(x+2)\sqrt{(1+x^{2})^{2}-4x}}{2x}.    \nonumber
\end{align}
\end{theorem}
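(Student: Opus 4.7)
The plan is to refine the two generating functions by the parity of the final letter and set up a closed system via a first-return decomposition. Write $A(x)=1+A_o+A_e$ and $B(x)=1+B_o+B_e$, where the subscript records whether the terminal letter is odd or even. Decompose a non-empty word $w=w_1\cdots w_n$, which forces $w_1=1$, according to whether $n=1$, $w_2=1$, or $w_2=2$. In the last case let $j$ be the smallest index with $2\le j\le n$ and $w_j=1$, or $j=n+1$ if no such index exists, so that $w_2,\ldots,w_{j-1}\ge 2$. Subtracting one from each of these letters produces a Catalan word of length $j-2$, and since the shift swaps the parities of all letters, the weakly even-up constraint becomes the weakly odd-up constraint (and vice versa). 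When $j\le n$, the transition $w_{j-1}\to 1$ is a strict decrease, which forces $w_{j-1}$ to be odd in the weakly even-up case (resp.\ even in the odd-up case); hence the shifted middle factor ends with an even letter (resp.\ odd).

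Translating this decomposition yields the system
\begin{align*}
A_o &= x(1+A_o)(1+B_e), & A_e &= xA_e(1+B_e)+xB_o,\\
B_o &= x(1+A_e)+xB_o(1+A_o), & B_e &= xA_o+xB_e(1+A_o).
\end{align*}
The key observation is that the equations for $A_o$ and $B_e$ form a closed subsystem. Setting $u=1+A_o$ and $v=1+B_e$ rewrites them as $u=1/(1-xv)$ and $v=(1-x)/(1-xu)$, and eliminating $v$ gives the quadratic
\[
xu^2-(1+x^2)u+1=0,
\]
whose branch analytic at $0$ is $u=\bigl((1+x^2)-\sqrt{(1+x^2)^2-4x}\bigr)/(2x)$.

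To extract $A(x)$ and $B(x)$, solve the equation for $A_e$ to obtain $A_e=xuB_o$, and substitute back into the equation for $B_o$ to obtain $B_o=x/\bigl(1-xu(1+x)\bigr)$. A short calculation using the quadratic shows that $A_o-A_e=x$, so $A(x)=1-x+2A_o$, which simplifies to the formula for $A$ stated in the theorem. For $B(x)=v+B_o$, rationalize each summand; the cleanest step is the algebraic identity
\[
(1-x-x^2-x^3)^2-(1+x)^2\bigl((1+x^2)^2-4x\bigr)=4x^2,
\]
which turns $B_o$ into a single expression and, combined with $v$, produces the stated closed form. The main obstacle is choosing the right refined generating functions: the letter preceding any return to level $1$ has a forced parity, so tracking the terminal parity of the shifted middle factor is what makes the first-return decomposition close; without that refinement the bookkeeping would fail. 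Once the four equations are in hand, the remaining work is routine algebraic manipulation.
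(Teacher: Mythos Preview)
Your proof is correct and follows essentially the same route as the paper: both use the first-return-to-$1$ decomposition together with the observation that subtracting $1$ from the excursion swaps parities, so the weakly even-up and weakly odd-up constraints interchange. Your $u=1+A_o$ and $v=1+B_e$ are precisely the paper's auxiliary series $A'(x)$ and $B'(x)$, and your closed subsystem $u=1+xuv$, $v=1-x+xuv$ is identical to the paper's pair $A'=1+xA'B'$, $B'=1-x+xA'B'$. The only cosmetic difference is bookkeeping: the paper keeps $A,B$ as primary unknowns and introduces $A',B'$ as auxiliaries, whereas you refine from the start into $A_o,A_e,B_o,B_e$; your shortcut $A_o-A_e=x$ (giving $A=2u-1-x$ immediately) is a pleasant extra observation that the paper does not record, but the underlying decomposition and the resulting algebra are the same.
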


\begin{proof}
Let $A'(x)=\sum_{n\geq 0} a'_nx^n$ be the generating function for the number of weakly even-up Catalan words which end with an odd number and let $B'(x)=\sum_{n\geq 0} b'_nx^n$ be the generating function for the number of weakly odd-up Catalan words which end with an even number. For every $n\geq 2$ we then have
\begin{align}
a_{n}&=b_{n-1}+\sum_{i=2}^{n}b'_{i-2}a_{n-i+1},\nonumber\\
b_{n}&=a_{n-1}+\sum_{i=2}^{n}a'_{i-2}b_{n-i+1},\nonumber\\
a'_{n}&=b'_{n-1}+\sum_{i=2}^{n}b'_{i-2}a'_{n-i+1},\nonumber\\
b'_{n}&=a'_{n-1}+\sum_{i=2}^{n}a'_{i-2}b'_{n-i+1}.    \nonumber
\end{align} Multiplying each of these equations by $x^n$ and summing over $n\geq 2$, we then obtain 
\begin{align}
A(x)&=1+xB(x)+xB'(x)(A(x)-1),\nonumber\\
B(x)&=1+xA(x)+xA'(x)(B(x)-1),\nonumber\\
A'(x)&=1+xA'(x)B'(x),\nonumber\\
B'(x)&=1-x+xA'(x)B'(x).    \nonumber
\end{align} Solving the third and fourth equations for $A'(x)$ and $B'(x)$, we obtain
\begin{align}
A'(x)&=\frac{1+x^{2}-\sqrt{(1+x^{2})^{2}-4x}}{2x},\nonumber\\
B'(x)&=\frac{1-x^{2}-\sqrt{(1+x^{2})^{2}-4x}}{2x}.    \nonumber
\end{align} Substituting these into the first two equations and solving them for $A(x)$ and $B(x)$, we finally obtain the asserted formulas. 
\end{proof}

\begin{table}[H]
\centering
\resizebox{\textwidth}{!}{%
\begin{tabular}{|*{12}{C|}} 
\hline
\multicolumn{6}{|c|}{Generating function} & \multicolumn{6}{c|}{OEIS} \\
\hline
\multicolumn{6}{|c|}{\vpad{$\displaystyle \frac{1-x-\sqrt{(1+x^{2})^{2}-4x}}{x}$}} & \multicolumn{6}{c|}{$2\times $\seqnum{A025242} (generalized Catalan numbers)} \\
\hline
$n$ & 0 & 1 & 2 & 3 & 4 & 5 & 6 & 7 & 8 & 9 & 10 \\
\hline
$a_n$ & 1 & 1 & 2 & 4 & 10 & 26 & 70 & 194 & 550 & 1588 & 4654 \\
\hline
\multicolumn{6}{|c|}{Generating function} & \multicolumn{6}{c|}{OEIS} \\
\hline
\multicolumn{6}{|c|}{\vpad{$\displaystyle -\frac{x^{3}+2x^{2}+x-2+(x+2)\sqrt{(1+x^{2})^{2}-4x}}{2x}$}} & \multicolumn{6}{c|}{-} \\
\hline
$n$ & 0 & 1 & 2 & 3 & 4 & 5 & 6 & 7 & 8 & 9 & 10 \\
\hline
$b_n$ & 1 & 1 & 2 & 5 & 12 & 31 & 83 & 229 & 647 & 1863 & 5448 \\
\hline
\multicolumn{6}{|c|}{Generating function} & \multicolumn{6}{c|}{OEIS} \\
\hline
\multicolumn{6}{|c|}{\vpad{$\displaystyle \frac{1+x^{2}-\sqrt{(1+x^{2})^{2}-4x}}{2x}$}} & \multicolumn{6}{c|}{\seqnum{A025242} (generalized Catalan numbers)} \\
\hline
$n$ & 0 & 1 & 2 & 3 & 4 & 5 & 6 & 7 & 8 & 9 & 10 \\
\hline
$a'_n$ &1& 1& 1& 2& 5& 13& 35& 97& 275& 794& 2327 \\
\hline
\multicolumn{6}{|c|}{Generating function} & \multicolumn{6}{c|}{OEIS} \\
\hline
\multicolumn{6}{|c|}{\vpad{$\displaystyle \frac{1-x^{2}-\sqrt{(1+x^{2})^{2}-4x}}{2x}$}} & \multicolumn{6}{c|}{\seqnum{A025242} (generalized Catalan numbers)} \\
\hline
$n$ & 0 & 1 & 2 & 3 & 4 & 5 & 6 & 7 & 8 & 9 & 10 \\
\hline
$b'_n$ & 1& 0& 1& 2& 5& 13& 35& 97& 275& 794& 2327 \\
\hline
\end{tabular}}
\caption{The generating functions of Theorem \ref{t0}.}
\end{table}

\begin{theorem}\label{t1}
Let $A(x)=\sum_{n\geq 0} a_nx^n$ and $B(x)=\sum_{n\geq 0} b_nx^n$ be the generating functions for the number of even-up and odd-up Catalan words, respectively. Then 
\begin{align}
A(x)&=\frac{2-x}{2x}-\frac{(2+x)\sqrt{(1+x)(1-3x)}}{2x(1+x)},\nonumber\\
B(x)&=\frac{1-\sqrt{(1+x)(1-3x)}}{x(1+x)}.    \nonumber
\end{align}
\end{theorem}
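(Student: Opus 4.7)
Following the template of Theorem~\ref{t0}, let $A'(x)=\sum_{n\geq 0} a'_nx^n$ be the generating function for even-up Catalan words ending with an odd letter, and $B'(x)=\sum_{n\geq 0} b'_nx^n$ the generating function for odd-up Catalan words ending with an even letter, with the conventions $a'_0=b'_0=1$.

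The plan is to set up a recursion for each of $a_n,b_n,a'_n,b'_n$ via the first-return decomposition. A nonempty even-up Catalan word $w$ with $w_1=1$ splits into three cases: (i) $w_2=1$, in which case $w_2\cdots w_n$ is itself an even-up Catalan word of length $n-1$; (ii) $w_2=2$ and $w$ never returns to $1$, in which case subtracting $1$ from every letter of $w_2\cdots w_n$ yields an odd-up Catalan word of length $n-1$ (shifting flips parities, so the strict even-up constraint becomes the strict odd-up one); and (iii) $w_2=2$ with first return at index $i\in\{3,\ldots,n\}$, in which case the shifted prefix is an odd-up Catalan word of length $i-2$ ending in an even letter---strictness forces $w_{i-1}$ to be odd, since otherwise $w_i>w_{i-1}\geq 2$ would contradict $w_i=1$---and $w_i\cdots w_n$ is an even-up Catalan word of length $n-i+1$. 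The analogous analysis for odd-up words differs in two places: case (i) is forbidden, since $w_1=1$ is odd and strictness forces $w_2>1$; and in case (iii) the roles of the two parities are interchanged, so the shifted prefix is an even-up Catalan word ending in an odd letter. Restricting to words with a prescribed final-letter parity yields the parallel recursions for $a'_n$ and $b'_n$.

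Translating these recursions into generating functions---exactly as in the proof of Theorem~\ref{t0}---produces the system
\begin{align*}
A'(x)&=1+xA'(x)B'(x),\\
B'(x)&=1+x(A'(x)-1)B'(x),\\
A(x)&=1+xA'(x)B(x),\\
B(x)&=1+xA(x)B'(x).
\end{align*}
The absence of the $w_2=1$ case in the odd-up decomposition is what shifts the lower limit of the return-sum from $2$ to $3$ and thereby removes a term from the equations for $B'(x)$ and $B(x)$ compared to their weakly counterparts. Eliminating $B'(x)$ from the first two equations gives the quadratic $xA'(x)^2-(1+x)A'(x)+(1+x)=0$, whose root with $A'(0)=1$ is
\[
A'(x)=\frac{(1+x)-\sqrt{(1+x)(1-3x)}}{2x},
\]
and then $B'(x)=A'(x)/(1+x)$.

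The last two equations yield $A(x)=(1+xA'(x))/(1-x^2A'(x)B'(x))$, together with an analogous expression for $B(x)$. Substituting the formulas for $A'(x)$ and $B'(x)$ and simplifying---using in particular the identity $1-x^2A'(x)B'(x)=(1+x+\sqrt{(1+x)(1-3x)})/2$ and rationalizing the resulting radicals---produces the claimed closed forms. The main obstacle is this last algebraic simplification; the combinatorial setup is a direct adaptation of the argument in Theorem~\ref{t0}.
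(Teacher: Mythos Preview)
Your argument is correct and follows the same first-return decomposition the paper has in mind (its hint is simply that the sums in the $b_n$ and $b'_n$ recursions start at $i=3$ instead of $i=2$, which is exactly your observation that case~(i) is forbidden in the odd-up setting), and your closed forms for $A'(x)$ and $B'(x)$ coincide with the paper's.

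One small expository point: the third and fourth equations you display are not the \emph{literal} translations of the first-return recursions you describe---those would read $A(x)=1+xB(x)+xB'(x)(A(x)-1)$ and $B(x)=1+xA(x)+x(A'(x)-1)(B(x)-1)$, mirroring Theorem~\ref{t0}---but they are equivalent once you use the first two equations to rewrite $xB'(x)=1-1/A'(x)$ and $x(A'(x)-1)=1-1/B'(x)$ (or, alternatively, they follow directly from a \emph{last}-return-to-$1$ decomposition). Either way the system and its solution are correct.
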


\begin{table}[H]
\centering
\resizebox{\textwidth}{!}{%
\begin{tabular}{|*{12}{C|}} 
\hline
\multicolumn{6}{|c|}{Generating function} & \multicolumn{6}{c|}{OEIS} \\
\hline
\multicolumn{6}{|c|}{\vpad{$\displaystyle 
\frac{2-x}{2x}-\frac{(2+x)\sqrt{(1+x)(1-3x)}}{2x(1+x)}$}} & \multicolumn{6}{c|}{-} \\
\hline
$n$ & 0 & 1 & 2 & 3 & 4 & 5 & 6 & 7 & 8 & 9 & 10 \\
\hline
$a_n$ & 1& 1& 2& 3& 7& 15& 36& 87& 218& 555& 1438 \\
\hline
\multicolumn{6}{|c|}{Generating function} & \multicolumn{6}{c|}{OEIS} \\
\hline
\multicolumn{6}{|c|}{\vpad{$\displaystyle \frac{1-\sqrt{(1+x)(1-3x)}}{x(1+x)}$}} & \multicolumn{6}{c|}{\seqnum{A124791}} \\
\hline
$n$ & 0 & 1 & 2 & 3 & 4 & 5 & 6 & 7 & 8 & 9 & 10 \\
\hline
$b_n$ & 1& 1& 1& 3& 5& 13& 29& 73& 181& 465& 1205 \\
\hline
\multicolumn{6}{|c|}{Generating function} & \multicolumn{6}{c|}{OEIS} \\
\hline
\multicolumn{6}{|c|}{\vpad{$\displaystyle \frac{1+x-\sqrt{(1+x)(1-3x)}}{2x}$}} & \multicolumn{6}{c|}{\seqnum{A001006} (Motzkin numbers)} \\
\hline
$n$ & 0 & 1 & 2 & 3 & 4 & 5 & 6 & 7 & 8 & 9 & 10 \\
\hline
$a'_n$ &1& 1& 1& 2& 4& 9& 21& 51& 127& 323& 835 \\
\hline
\multicolumn{6}{|c|}{Generating function} & \multicolumn{6}{c|}{OEIS} \\
\hline
\multicolumn{6}{|c|}{\vpad{$\displaystyle \frac{1+x-\sqrt{(1+x)(1-3x)}}{2x(1+x)}$}} & \multicolumn{6}{c|}{\seqnum{A005043} (Riordan numbers)} \\
\hline
$n$ & 0 & 1 & 2 & 3 & 4 & 5 & 6 & 7 & 8 & 9 & 10 \\
\hline
$b'_n$ & 1& 0& 1& 1& 3& 6& 15& 36& 91& 232& 603 \\
\hline
\end{tabular}}
\caption{The generating functions of Theorem \ref{t1}.}
\end{table}

\end{document}